  \pgfplotsset{compat=1.3}
\def\cO{\mathcal{O}}
\def\cK{\mathcal{K}}
\def\cL{\mathcal{L}}
\def\fp{\mathfrak{p}}
\def\FF{\mathbb{F}}
\def\ZZ{\mathbb{Z}}
\def\QQ{\mathbb{Q}}
\def\ideal{\triangleleft}
\def\lcm{\operatorname{LCM}}
\def\abs#1{\lvert#1\rvert}
\def\ntame{n_\text{tame}}
\def\nwild{n_\text{wild}}
\def\isom{\cong}
\def\uhat{\hat{u}}
\def\Vbar{\bar{V}}
\definecolor{darkred}{rgb}{0.6, 0.0, 0.0}
\definecolor{darkgreen}{rgb}{0.0, 0.6, 0.0}
\def\editstart{\begingroup\marginpar{\color{black}EDIT}\color{darkred}}
\def\editswap{\color{darkgreen}}
\def\editend{\endgroup}
\long\def\editstart#1\editswap{}
\def\editend{}
\newcommand{\edit}[2]{\editstart#1\editswap#2\editend}
\begin{document}

\title{3-torsion and conductor of genus 2 curves}

\author{Tim Dokchitser}
\address{Department of Mathematics, University of Bristol, Bristol BS8 1TW, UK}
\email{tim.dokchitser@bristol.ac.uk}

\author{Christopher Doris}
\address{Department of Mathematics, University of Bristol, Bristol BS8 1TW, UK}
\email{cd14919@bristol.ac.uk}

\keywords{Conductor, hyperelliptic curves, 3-torsion, local fields}
\subjclass[2000]{11G20 (Primary); 14D10, 11F80, 11G30}

\begin{abstract}
We give an algorithm to compute the conductor for curves of genus 2. It is based on the 
analysis of 3-torsion of the Jacobian for genus 2 curves over 2-adic fields.
\end{abstract}

\maketitle
%

\section{Introduction}
\label{sIntro}

One of the main arithmetic invariants of a curve $C/\Q$ (or over a number field) 
is its \emph{conductor}.
It is a representation-theoretic quantity measuring the arithmetic complexity of $C$,
and it is particularly important in the considerations that involve 
Galois representations or $L$-functions of curves. 

In practice, the conductor is difficult to compute.
It is defined as a product $N=\prod_p p^{n_p}$ over primes $p$, so the problem is 
computing the local \emph{conductor exponents} $n_p$; these are functions of $C/\Q_p$. 
For elliptic curves (genus 1), the problem of computing $n_p$ is solved
with Tate's algorithm \cite{TatA} and Ogg-Saito formula \cite{Ogg, Sai}.
In genus 2 and $p\ne 2$ there is an algorithm of Liu \cite{LiuA} via the 
Namikawa--Ueno classification \cite{NU}, and for hyperelliptic curves of arbitrary genus 
there is a formula for the conductor \cite{M2D2}, again for $p\ne 2$.

As the global conductor $N$ requires the knowledge of $n_p$ for \emph{all} primes~$p$, 
including $p=2$,
it is currently only provably computable for elliptic curves, and for quotients of 
modular curves using modular methods (see e.g. \cite{FLSSSW}). In practice, 
one can guess $N$ from the functional equation of the $L$-function 
(see e.g. \cite{computel, BSSVY}), but this approach is conditional
on the conjectural analytic continuation of the $L$-function, and is basically
restricted to reasonably small~$N$.

In this paper, we propose an (unconditional) algorithm to compute the conductor 
for curves of genus 2. The case to consider is $p=2$, 
so from now on $C$ will be a non-singular projective curve of genus 2, defined 
over a finite extension $K$ of $\Q_2$. Recall that the conductor exponent 
is the sum of the \emph{tame} and \emph{wild} parts (see \S\ref{sCond}),
$$
  n_2 = n = \ntame + \nwild.
$$
The difficult one is the wild part, which is the Swan conductor of the $l$-adic 
Tate module of the Jacobian $J/K$ of $C/K$, for any $l\ne 2$. 
We will take $l=3$ and use that $\nwild$ can be computed from the action of $\Gal(\bar K/K)$
on the 3-torsion $J[3]$. The equations defining $J[3]$ as a scheme are well-known
in genus 2 (see \S\ref{s3tors} or \cite{BFT}) and we use Grobner basis machinery to convert
them essentially to a univariate equation of degree $80 =|J[3]\setminus \{0\}|$. The problem 
then becomes to compute the Galois group of this polynomial, and enough information about
the inertia action on the roots to reconstruct the conductor. This is the core of the paper 
(\S\ref{sec/wild}). In particular, we discuss how to guarantee that the results are provably
correct (\S\ref{ssProvability}).

As for the tame part, it can be computed from the regular model of $C/K$, which is in principle 
accessible: take any model of $C$ over the ring of integers of $K$, and perform repeated
blowups until it becomes regular\footnote{Then $\ntame=4-2d_a-d_t$, where $d_a$ (`abelian part')
is the sum of genera of reduced components of the special fibre of the model, and 
$d_t$ (`toric part') is the number of loops}.
However, the algorithm to compute a regular model is currently only partially implemented
in Magma \cite{Magma}, and so we complement our algorithm with a result that determines 
$\ntame$ from elementary invariants, in the majority of the cases (Theorem \ref{thmtame}).

\edit{}{
An alternative approach to getting the conductor would be to find
a Galois extension $F/K$ where $C$ acquires semistable reduction and a semistable model over $F$, 
and analyse the action of inertia of $F/K$ on the model. From this one can determine 
the $l$-adic representation $V_l J$, in particular the conductor exponent;
see e.g. \cite[\S6]{hq}.
Moreover, that there are more compact polynomials defining such an $F$ in the case of genus 2, $p=2$
than the degree 80 3-torsion polynomial. For example, there is the monodromy polynomial of 
Lehr-Matignon in the potentially good reduction case, of degree 16 \cite[\S3]{LM}.
However, the splitting field of any such polynomial would have ramification degree no less 
that that of $K(J[3])/K$, by the Serre-Tate theorem \cite[Cor. 2]{ST}. 
So such a field (and the model of $C$ over it) would be still prohibitively 
large to compute, and our algorithm avoids this.
}

We end by noting that the core of the paper is a special test case of a general 
algorithm (in progress) to find Galois groups over local fields \cite{DPhD}.
Regarding Groebner bases,
the algorithm would be accelerated by an algorithm to solve multivariate systems of equations
$p$-adically (see Remark~\ref{accelgroebner}). This is also work in progress.
Finally, it should be possible to extend the algorithm to compute the conductor to function fields 
of characteristic~2 as well, by modifying the equations of the curve and its 3-torsion 
in~\S\ref{s3tors} appropriately.

This algorithm has been implemented as a Magma package \cite{3TorsionCode}, and has been used to verify most of the genus 2 curves in the LMFDB (\S\ref{sec/implementation}).

\subsection*{Acknowledgements}

This research is partially supported by an \hbox{EPSRC} grant EP/M016838/1 
`Arithmetic of hyperelliptic curves' and by GCHQ. 
We would like to thank David Roberts for helpful discussions \edit{}{and the referees for their suggestions}.

\newpage

\section{Notation}
\label{sCond}

\noindent
Throughout the paper, we use the following notation:

\smallskip

\begin{tabular}{llllll}
$K, L,...$ & local fields, of residue characteristic $p$\cr
$\cK, \cL,...$ & global fields\cr
$G_K$ & $=\Gal(\bar K/K)$, the absolute Galois group of $K$\cr
$I_K<G_K$ & its inertia group\cr
$T$ & $\Zl$-module with an action of $G_K$, with $l\ne p$\cr
$V$ & the associated $l$-adic representation $T\tensor_{\Zl}\Ql$\cr
$\bar V$ & the reduction $T\tensor_{\Zl}\F_l$\cr
$G^u$ & upper numbering of ramification groups\cr
$G_v$ & lower numbering of ramification groups\cr
$n = \ntame + \nwild$ & conductor exponent\cr
\end{tabular}

\smallskip

We are interested in the situation that $J/K$ is an abelian variety,
$T=T_l J$ is its $l$-adic Tate module, $V=V_l J$ and $\bar V=J[l]$ is its $l$-torsion.
Recall that the conductor exponent of such a representation is given by
(see e.g. \cite{Ulmer}) 
%
%
%
%
%
$$
  n(V) = \int_{-1}^\infty \codim V^{G_K^u} du, 
$$
with 
$$
  \ntame(V) = \int_{-1}^0 \qquad\text{and}\qquad \nwild(V) = \int_0^\infty.
$$
For $u>0$, $G_K^u$ is pro-$p$, and \cite[\S6]{Ulmer}
$$
  \codim V^{G_K^u} = \codim {\bar V}^{G_K^u}.
$$
Our approach is that we will compute $\ntame(V)$ as the codimension of inertia invariants $V^{I_K}$,
and the wild conductor exponent as 
$$
  \nwild(V) = \int_0^\infty \codim {J[l]}^{G_K^u} du,
$$
and replacing $G_K$ by $\Gal(K(J[l])/K)$.

\comment

\bigskip
{\bf old stuff}
\bigskip

Letting \(v_0=0\), \(v_1=1\) and \(v_2,\ldots,v_t\) be the distinct ramification breaks of \(G\), then
\[
n(\rho) = \sum_{i=0}^{t-1} \frac{1}{(G_0 : G_{v_i})} (v_{i+1} - v_i) (\dim V - \dim V^{G_{v_i}})
\]
and \(n_\text{tame}(\rho)\) and \(n_\text{wild}(\rho)\) are given by the \(i=0\) and \(i>0\) terms respectively.

Letting \((u_i)_{i=0}^t\) be the corresponding upper ramification breaks, so that \(G_{v_i} = G^{u_i}\), then the relationship
\[\frac{u_{i+1} - u_i}{v_{i+1} - v_i} = \frac{1}{(G_0 : G_{v_i})}\]
implies
\[n(\rho) = \sum_{i=0}^{t} (u_{i+1} - u_i)(\dim V - \dim V^{G^{u_i}}).\]
Therefore to compute \(n\) it suffices to know the upper ramification breaks \(u_i\) and the dimensions \(\dim V^{G^{u_i}}\).

In our case, we are given a hyperelliptic curve \(C : y^2 = f(x)\) of genus 2 defined over a number field \(K\) and we wish to compute its conductor exponent at a prime \(\fp\). That is, we wish to compute the conductor exponent at \(\fp\) of the Tate module \(T := T_\ell J\) where \(\ell\) is a rational prime coprime to \(\fp\) and \(J\) is the Jacobian of \(C\). Concretely, the Tate module is the limit of \(G\)-modules \(J[\ell^i] \isom (\ZZ/\ell^i\ZZ)^4 \setminus 0\) where \(G:=\Gal(\bar K/K)\) acts linearly giving a representation \(\rho : G \to \GL_4(\ZZ_\ell)\).

Composing with the residue map, we get the finite representation \(\bar\rho : G \to \GL_4(\FF_\ell)\) and we ask how \(\rho\) and \(\bar\rho\) are related. The answer is that they have the same wild conductor exponents, so we shall concern ourselves with computing this.

\endcomment

\section{Tame conductor exponent}
\label{sTame}

Let $K$ be any non-Archimedean local field, $J/K$ a $g$-dimensional abelian variety, 
and $l$ a prime different from the 
residue characteristic of $K$. Write $T=T_l J$ for the $l$-adic Tate module of $J/K$ and $V=V_l J=T_l J\tensor_{\Z_l}{\Q_l}$,
both viewed as representations of the inertia group $I_K<G_K$. 

Recall\edit{}{\footnote{
These are `standard' facts that we found a little hard to locate in the literature, 
but they are summarised in \cite{CFKS} \S 2.10: for the 
existence of a $\Gal(\bar K/K)$-stable filtration that forces the Galois group action to be 
upper-triangular see \cite[p.13, 2nd half]{CFKS}; for the fact that the representations on 
the graded pieces $\chi$ and $\rho$ are independent of $l$ see \cite[p.13, bottom]{CFKS}, and 
for the maps between them and the monodromy pairing \cite[pp. 12,14]{CFKS}. 
See also forthcoming paper \cite{skew}.}
}
that there is a canonical filtration on $T$  
coming from the toric part and the abelian part of $J$ over a field where it acquires 
semistable reduction. \edit{See e.g. \cite{CFKS} \S 2.10 for details.}{}
With respect to this filtration, $I_K$ acts on $T$ as
\begin{equation}\label{Iaction}
\begin{pmatrix}
 \chi & *    & N  \cr
 0    & \rho & *     \cr
 0    & 0    & \hat\chi  \cr
\end{pmatrix}
\end{equation}
with $\chi: I_K\to\GL_t(\Z_l)$, 
{$\rho: I_K\to\GL_{2a}(\Z_l)$}
continuous with finite image
(t=`toric', a=`abelian', 
{$2t+2a=\rk_{\Z_l}T=2g$}), and $\hat\chi$ the dual of $\chi$.
The `monodromy matrix' $N$ has $\Z$-coefficients, and
$\chi$ factors through $\GL_t(\Z)$ as well.
In particular, $\chi\tensor\Q_l$ is 
self-dual with determinant of order 1 or~2. Consequently, the same holds
for~$\rho\tensor\Q_l$, as $\det\eqref{Iaction}=1$ by the Weil pairing.

%
%

\makeatletter
\def\hetype#1{\hbox{$\tn@main#1\tn@end\tn@end$}}
\def\tn@und#1#2{{\tn@dound #2\tn@end\tn@end}\tn@main}
\def\tn@tri#1#2#3{{\smash{\raise2pt\hbox{$\scriptscriptstyle #3$}}}\tn@main}
\def\tn@main#1{%
   \ifx#1\tn@end\relax\else   
   \ifx#1__\expandafter\tn@und\else
   \ifx#1^ ^\expandafter\tn@tri\else     
   \ifx#1.\hbox{$\cdot$}\else
   \ifx#1:\hbox{:}\else
   \ifx#1o\hbox{$\hspace{0.6pt}\circ\hspace{0.6pt}$}\else
   \ifx#1x\hbox{$\hspace{0.6pt}\times\hspace{0.6pt}$}\else   
   \ifx#1I\hbox{I}\else   
   \ifx#1U\hbox{U}\else   
   #1\fi\fi\fi\fi\fi\fi
   \expandafter\expandafter\expandafter\expandafter
   \expandafter\expandafter\expandafter\tn@main\fi\fi\fi}
\def\tn@dound#1{%
  \ifx#1\tn@end\relax\else
  \ifx#1~\hbox{$\FrobL$}\else #1\fi\expandafter\tn@dound\fi}
\let\tn@end\relax
\makeatother

Now, we specialise to the case when $J=\Jac C$ is the Jacobian of a genus~2 curve and $l=3$.
We will explain in \S\ref{sec/wild} how to compute the image $I$ of $I_K$ in $\Aut J[3]$ and 
the dimension of inertia invariants $\dim J[3]^I$. 
{
We can also compute $t$ and $a$ using a theorem of Liu \cite[Thm 1]{Liu} that
determines the stable type of $C/K$ from the Igusa invariants of the curve. 
There are 7 possible stable types in genus 2, in other words possibilities for stable reduction. 
(For elliptic curves there are 2 types of stable reduction --- good and multiplicative.)
They are listed as cases I, II, ..., VII in Liu's theorem, and in the notation of \cite{Hyble} they are denoted
$
  \hetype{2},\>\> \hetype{1_n},\>\> \hetype{I_{n,m}},\>\> \hetype{U_{n,m,r}},\>\> \hetype{1x1},\>\> \hetype{1xI_n},\>\> \hetype{I_nxI_m}.
$
\edit{}{
The special fibres are as follows, with numbers above the components indicating geometric genus:

\tikzset{ 
  al/.style={above left=-0.02 and -0.1,scale=0.7},
  ar/.style={above right=-0.02 and -0.1,scale=0.7}
}

\begin{figure}[h]
\begin{center}
\begin{tikzpicture}[scale=0.35]
\draw[thick] (0,0) -- +(3,0) node[al] {2};
\node at (1.5,-1) {$2$};
\end{tikzpicture}
\qquad
\begin{tikzpicture}[scale=0.35]
\draw[thick] (0,0) -- +(1,0) arc (270:360:2 and 1) arc (0:180:1) arc (180:270:2 and 1) -- +(1,0) node[al] {1};
\node at (2,-1) {$1_n$};
\end{tikzpicture}
\qquad
\begin{tikzpicture}[scale=0.35]
\draw[thick] (0,0) -- +(1,0) arc (270:360:2 and 1) arc (0:180:1) arc (180:270:2 and 1) -- +(1,0) arc (270:360:2 and 1) arc (0:180:1) arc (180:270:2 and 1) -- +(1,0) node[al] {0};
\node at (3.5,-1) {$\mathrm I_{n,m}$};
\end{tikzpicture}
\qquad
\begin{tikzpicture}[scale=0.35]
\draw[thick] (0,0) -- (1,0) .. controls (2,0) and (2,2) .. (3,2) .. controls (4,2) and (4,0) .. (5,0) .. controls (6,0) and (6,2) .. (7,2) -- (8,2) node[al] {0};
\draw[thick] (0,2) -- (1,2) .. controls (2,2) and (2,0) .. (3,0) .. controls (4,0) and (4,2) .. (5,2) .. controls (6,2) and (6,0) .. (7,0) -- (8,0) node[al] {0};
\node at (4,-1) {$\mathrm U_{n,m,r}$};
\end{tikzpicture}
\par\vspace*{1em}
\begin{tikzpicture}[scale=0.35]
\draw[thick] (0,0) node[ar] {1} .. controls (2,0) .. (3,2);
\draw[thick] (2,2) .. controls (3,0) .. (5,0) node[al] {1};
\node at (2.5,-1) {$1 \times 1$};
\end{tikzpicture}
\qquad
\begin{tikzpicture}[scale=0.35]
\draw[thick] (0,0) node[ar] {1} .. controls (2,0) .. (3,2);
\draw[thick] (2,2) .. controls (3,0) .. (5,0) arc (270:360:2 and 1) arc (0:180:1) arc (180:270:2 and 1) -- +(1,0) node[al] {0};
\node at (4,-1) {$1 \times \mathrm I_n$};
\end{tikzpicture}
\qquad
\begin{tikzpicture}[scale=0.35]
\draw[thick] (0,0) node[ar] {0} -- +(1,0) arc (270:360:2 and 1) arc (0:180:1) arc (180:270:2 and 1) .. controls +(2,0) .. +(3,2);
\draw[thick] (5,2) .. controls +(1,-2) .. +(3,-2) arc (270:360:2 and 1) arc (0:180:1) arc (180:270:2 and 1) -- +(1,0) node[al] {0};
\node at (5.5,-1) {$\mathrm I_n \times \mathrm I_m$};
\end{tikzpicture}
\caption{The 7 stable reduction types for genus 2.}
\end{center}
\end{figure}
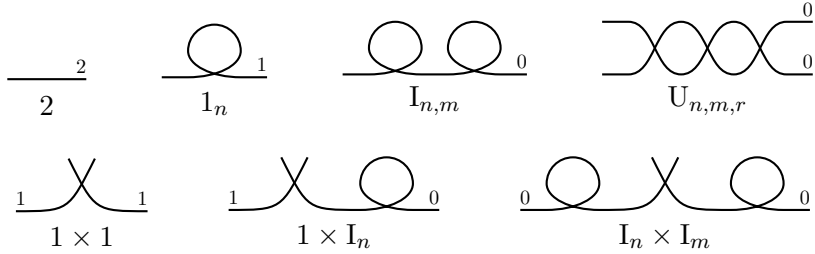
}

\par
\noindent
Of these, types \hetype{2} and \hetype{1x1} have $t=0, a=2$ (potentially good reduction of~$J$), types
\hetype{1_n} and \hetype{1xI_n} have $t=a=1$ (mixed), and \hetype{I_{n,m}}, \hetype{U_{n,m,r}} and
\hetype{I_nxI_m} have $t=2, a=0$ (potentially totally toric reduction).

The main result of this section recovers the tame conductor exponent of $J/K$ from the
invariants $I$, $\dim J[3]^I$ and $t$, when this is possible:

%

\begin{theorem}
\label{thmtame}
Let $K$ be a non-Archimedean local field of residue characteristic $\ne 3$ and $C/K$ a genus 2 curve with
Jacobian $J/K$. Write 

\begin{tabular}{llllll}
$I$ &=& image of inertia $I_K<G_K$ in $\Aut J[3]$ (so $I<\Sp_4(\F_3)$), \cr
$d$ &=& $\dim (V_3 J)^I$ (so $0\le d\le 4$),\cr
$\bar d$ &=& $\dim J[3]^I$ (so $0\le \bar d\le 4$),\cr
$t$ &=& potential toric dimension of $J$ (so $0\le t\le 2$),\cr
$f$ &=& $4-d$ $=\ntame(V_3 J)=\ntame(J/K)$ (so $0\le f\le 4$).\cr
\end{tabular}

\noindent
Then $\bar d\ge d$ and so $f\ge 4-\bar d$. Moreover,
\begin{enumerate}
\item 
If $\bar d=0$ then $f=4$.
\item
If $\bar d=4$ then $d=4-t$ and $f=t$.
\item
Suppose $J$ has potentially good reduction ($t\!=\!0$). If $|I|\!=\!3$ and $\bar d\!=\!2$ then $f\!=\!4$;
in all other cases, $f$ is the smallest even integer $\ge 4\!-\!\bar d$.
\item
If $(t,|I|) \in \{ (1,3),(2,3),(1,2),(1,6)\}$ then $f$ is not uniquely determined as a 
function of $t$, $I$ and $\bar d$.
\item
If $(t,|I|)=(2,9)$ then $f=4$; in all other cases not covered, $f=3$.
\end{enumerate}
\end{theorem}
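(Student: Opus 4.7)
The plan is to analyse $V = V_3 J$ via the inertia-stable filtration~\eqref{Iaction}, exploiting that the graded pieces $\chi$ and $\rho$ are integer-valued and finite-image, that $V$ carries a symplectic form from the Weil pairing, and that $t$ controls the dimensions of the graded pieces. The first step is the comparison $\bar d \ge d$: since $T = T_3 J$ is free over $\Z_3$ and $T^{I_K}$ is saturated (if $3w \in T^{I_K}$ then $gw - w$ is $3$-torsion in $T$, hence zero), the reduction $T^{I_K}/3T^{I_K} \hookrightarrow (T/3T)^I = J[3]^I$ is injective, so $d = \rk T^{I_K} \le \dim_{\F_3} J[3]^I = \bar d$. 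This immediately settles~(1). For~(2), $\bar d = 4$ places the inertia image in $\Aut T$ inside the principal congruence subgroup $1 + 3 M_4(\Z_3)$; as any finite subgroup of $\GL_n(\Z)$ meeting $1 + 3M_n(\Z_3)$ is trivial (any $g = 1 + 3X$ with $g^N = 1$ forces $X = 0$, e.g.\ via the $3$-adic logarithm), both $\chi$ and $\rho$ are trivial. Then $J$ is semistable over $K$, $I_K$ acts on $V$ purely by monodromy, and $V^{I_K} = \ker N = W_2$ by non-degeneracy of Grothendieck's monodromy pairing on the toric piece, giving $d = 4 - t$ and $f = t$.

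Case~(3) is driven by a parity argument. For $t = 0$ the representation $V$ equals $\rho$, acted on by a finite group symplectically. In characteristic zero the averaging projector $\pi = |I|^{-1} \sum_{g \in I} g$ is self-adjoint for the symplectic form $\omega$ (via $\omega(gv, w) = \omega(v, g^{-1}w)$), so $V = V^{I_K} \oplus (V^{I_K})^\perp$ and $\omega$ restricts non-degenerately to $V^{I_K}$, forcing $d$ and $f = 4 - d$ even. Combined with $d \le \bar d$, $f$ is bounded below by the smallest even integer $\ge 4 - \bar d$. The only subcase where this bound is strict arises because an order-$3$ element acts on the irreducible 2-dimensional $\Q_3$-representation $\Q_3(\zeta_3)$ (irreducible since $\zeta_3 \notin \Q_3$) with no fixed vector, yet its mod-$3$ reduction is a unipotent Jordan block with a $1$-dimensional kernel. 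Enumerating finite symplectic subgroups of $\Sp_4(\Z_3)$ compatible with a given $I < \Sp_4(\F_3)$, one checks that the strict-inequality case only arises when $|I| = 3$ and $V \cong \Q_3(\zeta_3)^2$, yielding $d = 0,\, \bar d = 2,\, f = 4$.

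For the remaining cases I would proceed by $(t, |I|)$. For $(t, |I|) = (2, 9)$: since $C_9 \not\hookrightarrow \GL_2(\Z)$ (the minimal faithful $\Q$-representation of $C_9$ has dimension $\phi(9) = 6$), $\chi$ must factor through $C_3 \subset \GL_2(\Z)$ acting as the irreducible rotation; so $V_1^\chi = V_3^{\hat\chi} = 0$, and chasing the filtration (forcing first $v_3 = 0$, then $v_1 = 0$) yields $V^{I_K} = 0$, hence $f = 4$. For each remaining $(t, |I|)$ outside~(4), a direct enumeration of possible $\chi, \rho, \hat\chi$ compatible with the given image $I$ pins down a unique $1$-dimensional invariant line in $V$, yielding $d = 1$ and $f = 3$. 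For~(4) one must exhibit explicit pairs of curves sharing $(t, I, \bar d)$ but giving different $f$; for example for $(t, |I|) = (1, 3)$, $\chi = 1$ is forced (no order-$2$ quotient of $C_3$), while $\rho: C_3 \to \GL_2(\Z)$ may be trivial or the irreducible rotation, yielding different $d$. Analogous constructions handle the other three pairs.

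The main obstacle is the systematic verification of the generic $f = 3$ claim in~(5) and the construction of the ambiguous examples in~(4). One must enumerate the symplectic-integral possibilities for $(\chi, \rho, N)$ across the seven stable-reduction types of Liu's classification, confirm that the off-diagonal parts of~\eqref{Iaction} do not produce unexpected $I_K$-invariants, and realise each configuration by an explicit genus $2$ curve --- a computation best organised case-by-case against the classification.
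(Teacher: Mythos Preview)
Your approach matches the paper's in outline: the saturated-lattice argument for $\bar d \ge d$, the parity/self-duality argument for (3), the explicit-example strategy for (4), and the $\GL_2(\Z)$ case analysis for (5) are all the same. The one genuine variation is in (2): you argue directly that $\chi$ and $\rho$ must be trivial because the principal congruence subgroup $1 + 3M_n(\Z_3)$ is torsion-free, whereas the paper simply invokes Raynaud's semistability criterion (\cite[Prop.~4.7]{SGA71}); your route is self-contained and equally valid. For (3), where you defer to an enumeration, the paper is concrete: it first observes that $3 \mid |I|$ (else modular and ordinary representation theory agree) and that $C_3 \times C_3$ is not a quotient of $I$ (tame inertia is cyclic), then uses a Magma search in $\Sp_4(\F_3)$ to reduce to five subgroup classes---three copies of $C_3$, one $C_6$, one $\SL_2(\F_3)$---handles the $C_3$'s via the Diederichsen--Reiner classification of $\Z_3[C_3]$-lattices, and rules out $C_6$ and $\SL_2(\F_3)$ by decomposing $T$ under the central involution. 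For (4) the paper supplies the explicit curves over $\Q_2$ you anticipate, and for (5) the paper's case split on the finite image $\bar I \subset \GL_2(\Z)$ is exactly the enumeration you describe as the remaining obstacle.
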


\begin{proof}
Write $T=T_3 J$, $V=V_3 J$.
Note that after tensoring \eqref{Iaction} with $\Q_3$ and a suitable change of basis,
both $*$'s can be made 0 and $N$ a $t\times t$ identity matrix. In particular,
\begin{equation}\label{Iinv}
  V^{I_K} = \chi^I\oplus \rho^I, \qquad f = 4 - \dim\chi^I - \dim\rho^I.
\end{equation}
If $V$ has an $I_K$-invariant subspace of dimension
$d$, its intersection with $T$ gives a rank $d$ saturated sublattice of $T$,
whose reduction contributes at least dimension $d$ to $J[3]^I$. This shows that $\bar d\ge d$, 
and implies (1).

(2) 
{By Raynaud's semistability criterion \cite[Prop 4.7]{SGA71},
$J$ is semistable if $J[m]$ is unramified for some $m\ge 3$ coprime to the residue characteristic.}
Here $I_K$ acts trivially on $J[3]$, and so $J$ is semistable.
In other words, $f=t$ and $d=4-t$.

For the remainder of the proof, we assume $\bar d\in \{1,2,3\}$.

(3) By Serre-Tate's theorem \cite[Cor. 2]{ST}, $J$ has good reduction over $K(J[3])$;
that is, $I_K$ acts on $V_3 J$ through $I$. By Poincare duality, this representation has
even-dimensional inertia invariants, in other words $d$ is even. As $d\le \bar d\in \{1,2,3\}$,
the only possibility for {$f=4-d$} not to be the smallest even integer $\ge 4-\bar d$ is when
$d=0$ and $\bar d\in\{2,3\}$. Suppose we are in that case.

{
Consider the possibilities for $I<\Sp_4(\F_3)$. Note that $3$ divides $|I|$, for otherwise
the classical representation theory of $I$ agrees with its modular representation over $\F_3$, 
implying $d=\bar d$. Also note that $C_3\times C_3$ is not a quotient of $I$, as 
the residue characteristic of $K$ is not 3, and tame inertia is cyclic.
Computing in Magma \cite{Magma}, we find that that $\Sp_4(\F_3)$ has 162 conjugacy classes of subgroups,
of which 5 satify the three properties (a) order multiple of 3, (b) no $C_3\times C_3$-quotient, and 
(c) $\bar d\in\{2,3\}$. 
Call them $H_1, H_2, H_3\iso C_3$, $H_4\iso C_6$ and $H_5\iso\SL_2(\F_3)$. 

By the classification of integral $C_p$-lattices \cite{Die, Rei}, there are two indecomposable 
$\Z_3[C_3]$-lattices, up to isomorphism: the trivial lattice of rank 1, and a lattice $\Lambda$ of rank 2 on which 
the generator of $C_3$ acts as $\smallmatrix {-1}1{-1}0$; every finite rank $\Z_3[C_3]$-lattice 
is a direct sum of these. If $I\iso C_3$, then as $d=0$, we must have $T\iso \Lambda\oplus\Lambda$, and it has 
$\bar d=2$ as claimed.

It remains to show that $I\in\{H_4,H_5\}$ with $d=0$ is impossible. 
Suppose we are in this case, and let $z\in I$ be the unique central element of order~2.
As above, the classical representation theory of the group $\langle z\rangle\iso C_2$ agrees with its 
modular representation over $\F_3$.
In both $H_4$ and $H_5$ the action of $z$ on $\bar V=J[3]$ has two $+1$ 
and two $-1$ eigenvalues. The same is therefore true for $V$; moreover, $V=V^+\oplus V^-$ 
and $T=T^+\oplus T^-$ decompose into the two 2-dimensional eigenspaces for $z$ 
and this decomposition induces the one on $J[3]$.

The group $\SL_2(\F_3)$ has 3 one-dimensional complex representations
factoring through $\SL_2(\F_3)/Q_8\iso C_3$, three faithful 2-dimensional ones in which 
$z$ acts as $-1$, and a 3-dimensional one with $z$ acting as $+1$. 
Thus, when $I$ is $H_4$ and $H_5$, the space $T^+$ must be a representation of the unique 
$C_3$ quotient of $I$. It has no trivial subrepresentations (as $d=0$), so $T^+\iso\Lambda$
as a $\Z_3[C_3]$-module. 
But then
$$
  \bar d = \dim (\Lambda\tensor\F_3)^{C_3} + \dim (T^-\tensor\F_3)^{I} = 1 + 0,
$$
contradicting the assumption $\bar d\in\{2,3\}$.

%
%
%

}

(4) The following curves give examples over $\Q_2$ that prove that 
$f$ is not a function of $t$, $I$ and $\bar d$, as claimed.
(In each case, $f$ can be determined by computing the regular model.)
$$
\begin{array}{lll@{\hskip 3em}l@{\hskip 3em}l}
t & I & \bar d & f & C/\Q_2 \cr
\hline
1 & C_3 & 3 & 1 & y^2 = x^6+4x^4+2x^3+4x^2+1 \cr         
1 & C_3 & 3 & 3 & y^2 = 4x^6-20x^4-8x^3+21x^2+22x+13 \cr 
\hline
2 & C_3 & 2 & 2 & y^2 = x^6+6x^4-7x^2+16 \cr
2 & C_3 & 2 & 4 & y^2 = 5x^6+4x^3-12 \cr 
\hline
1 & C_2 & 2 & 2 & y^2 = -x^6+6x^4-x^2-8 \cr  
1 & C_2 & 2 & 3 & y^2 = x^6-6x^4+x^2+8 \cr 
\hline
1 & C_6 & 1 & 3 & y^2 = x^6-6x^4+5x^2+8 \cr  
1 & C_6 & 1 & 4 & y^2 = x^6-31x^4-25x^2-32 \cr 
\end{array}
$$

(5) 
To deal with all the remaining cases,
first suppose that $J$ has totally toric reduction over $K(J[3])$,
in other words $t=2$. In the notation of \eqref{Iaction}, we have a homomorphism
$$
  \chi: I \lar \GL_2(\Z) \quad(\injects \GL_2(\Z_3))
$$
whose image we denote by $\bar I$ and whose kernel is $C_1$ or $C_3$.
%
Finite subgroups of $\GL_2(\Z)$ are contained in $D_4$ or $D_6$. Of those,
$D_3$, $D_6$ only occur as inertia groups in residue
characteristic 3, and $C_2^2$, $C_4$, $C_6$, $D_4$ have an element acting as 
$-1$, forcing $\bar d=0$ (case (1)). The remaining possibilities are
$$
  \bar I\in\{C_1, C_2, C_3\}, \qquad I\in \{C_1,C_2,C_3,C_6,C_9\}.
$$
We have excluded $I=C_1$ (case (1)) and $I=C_3$ (case (4)).
When $I=C_9$, its image $\bar I\iso C_3$ has no invariants, and so $f=4$
(proving the case $(t,|I|)=(2,9)$). The only remaining case is $\bar I=C_2$,
acting with eigenvalues $+1,-1$ (otherwise $\bar d\in\{0,4\}$ again).
In this case, the full action on $T$ is of the form
$$
\smaller[2]
\begin{pmatrix}
1 & 0 & * & 0 \cr
0 & -1 & 0 & * \cr
0 & 0 & 1 & 0 \cr
0 & 0 & 0 & -1 \cr
\end{pmatrix}
$$
in some basis, with non-zero $*$'s. This has one-dimensional invariants,
and so $f=3$, as claimed.

Finally suppose $t=1$, so that $I_K$ acts on $T$ as 
$$
\smaller[2]
\begin{pmatrix}
\chi & * & * & \ne 0 \cr
0 & a & b & * \cr
0 & c & d & * \cr
0 & 0 & 0 & \chi \cr
\end{pmatrix}
$$
As before, write $\rho$ for the representation $I_K\to \smallmatrix abcd$.
Because $I$ is not one of the already excluded groups $C_1, C_2, C_3, C_6$,
the image of $I_K$ under $\bar\rho=\rho\mod 3$ is not 
$C_1$ or $C_2$. But any other subgroup of $\GL_2(\Z_3)$ of 
finite order is either $D_3$, which cannot be a local Galois group, or 
$\bar\rho(I)$ has no invariants on $\F_3^2$. Hence $\bar\rho^{I_K}=0$,
and $J[3]^{I_K}=\chi^{I_K}$ has either dimension 0 (case (1)) or dimension 1 with $f=3$,
as claimed.
\end{proof}

\section{Wild conductor exponent}
\label{sec/wild}

Recall that we wish to compute 
$$
  \nwild = \int_0^\infty \codim J[3]^{G^u} du
$$ 
where \(G=G_K\). Note, however, that \(G_K\) acts on \(J[3]\) through its finite quotient \(\Gal(K(J[3])/K)\) so we may equally well take \(G=\Gal(K(J[3])/K)\) or any quotient in between.


The integrand here is decreasing, non-negative, integral and left-constant, so if we denote by \(u_1=0,u_2,\ldots,u_t\) the jump points in the integrand, then we get \[\nwild = \sum_{i=2}^{t} (u_i - u_{i-1}) \codim J[3]^{G^{u_i}}.\]

Let \(Z \in J[3]\) be a 3-torsion point and let \(L=K(Z)\) be the extension it generates. Then \(Z\) is fixed by \(G^u\) if and only if \(L\) is fixed by \(G^u\). Since \(G^u \normal G\), this occurs if and only if any \(K\)-conjugate of \(Z\) is fixed by \(G^u\). If \(\uhat=\uhat(L/K)=\inf\{u : L \text{ fixed by } G^u\}\) denotes the highest upper ramification break of \(L/K\), then this occurs if and only if \(\uhat \leq u\).

Hence, if \(Z_1,\ldots,Z_m\) are representatives of {the} \(K\)-conjugacy classes of \(J[3]\), generating extensions \(L_i/K\) with highest upper ramification break \(\uhat_i\) then letting \(u_0=-1 < u_1=0 < \ldots < u_t\) be the sorted elements of \(\{-1,0,\uhat_1,\ldots,\uhat_m\}\) we deduce \[\nwild = \sum_{i=2}^{t} (u_i - u_{i-1}) \left(2g - \log_3 \sum_{j : \uhat_j \leq u_i} (L_j : K) \right)\] since \(2g=\dim V\) and \((L_j:K)\) is the number of \(K\)-conjugates of {\(Z_j\)}.

We proceed by finding the extensions \(L_i/K\) explicitly, from which we compute \(\nwild\) via this equation.

\subsection{Equation for 3-torsion of genus 2 curves}
\label{s3tors}

As before, let $C/K$ be a curve of genus 2, with Jacobian $J$.
The linear system for the canonical divisor on $C$ yields a standard model
$$
  C: y^2 =f(x), \qquad \deg f=5\text{ or }6.
$$
The following statement is well-known (see e.g. \cite{BFT} proof of Lemma 3); in fact, it works
over any field of characteristic $\ne 2,3$.

\begin{proposition}
\label{3torsprop}
Non-zero elements of $J[3]$ are in 1-1 correspondence with ways of expressing $f$ in the form
$$
  f = (z_4 x^3 + z_3 x^2 + z_2 x + z_1)^2 - z_7 (x^2 + z_6 x + z_5)^3, \qquad z_i \in \bar K,
  \eqno{(*)}
$$
and this correspondence preserves the action of $G_K$. 
\end{proposition}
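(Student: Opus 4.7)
The plan is to attach to each non-zero $[E]\in J[3]$ a function on $C$ whose divisor is $3E$, and to read the polynomials $A(x)=z_4x^3+z_3x^2+z_2x+z_1$, $B(x)=x^2+z_6x+z_5$ and the scalar $z_7$ off this function. Write $K_C$ for the canonical class: $K_C=\infty_++\infty_-$ when $\deg f=6$, $K_C=2\infty$ when $\deg f=5$.

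For the forward map, I would first represent $[E]$ by a Mumford-style divisor $Q_1+Q_2-K_C$ with $Q_i\in C(\bar K)$. Existence is Riemann-Roch for any degree-0 class, and for $[E]\ne 0$ the effective divisor $Q_1+Q_2$ is unique (else $h^0(E+K_C)\ge 2$, which in genus 2 forces $E+K_C\sim K_C$, i.e.\ $[E]=0$). Since $3[E]=0$, the divisor $3E=3(Q_1)+3(Q_2)-3K_C$ is principal; pick $g$ with $\operatorname{div}(g)=3E$, unique up to scalar. As the pole divisor of $g$ is at most $3K_C$, we have $g\in L(3K_C)$, and a Riemann-Roch count gives $L(3K_C)=\langle 1,x,x^2,x^3,y\rangle$ in both degree cases; thus $g=A(x)+cy$ with $\deg A\le 3$, $c\in\bar K$. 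The case $c=0$ would make $\operatorname{div}(g)$ a pullback from $\mathbb P^1$, forcing $[E]=0$, so $c\ne 0$; replacing $g$ by $-g/c$ normalizes to $g=A(x)-y$, determining $A$ uniquely. I would then compute $g\cdot\sigma g=A^2-f$ where $\sigma$ is the hyperelliptic involution, and since $\sigma$ acts as $-1$ on $J$ (because $P+\sigma P\sim K_C$ for every $P\in C$),
\[
\operatorname{div}(g\,\sigma g)=3E+3\sigma E=3\pi^*\!\bigl(x(Q_1)+x(Q_2)-2\infty_{\mathbb P^1}\bigr)=3\operatorname{div}(B),
\]
with $B(x)=(x-x(Q_1))(x-x(Q_2))$ monic of degree 2. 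Since $B^3$ has the same divisor on $C$, $A^2-f=z_7 B^3$ for a nonzero constant $z_7$, which is exactly the identity $f=A^2-z_7B^3$ of $(*)$.

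For the backward map, given data $(A,B,z_7)$ satisfying $(*)$, set $\beta_1,\beta_2$ to be the roots of $B$, $Q_i=(\beta_i,A(\beta_i))\in C$, and verify $\operatorname{div}(A-y)=3(Q_1)+3(Q_2)-3K_C$ by checking local orders at each $Q_i$, at $\sigma Q_i$, and at infinity using $(A-y)(A+y)=z_7B^3$. Then $[Q_1+Q_2-K_C]$ is a non-zero 3-torsion class, and uniqueness of $(A,B,z_7)$ from the forward step makes this a bijection. Every construction above (Mumford representative, the function $g$, rescaling by $-1/c$, reading off the coefficients of $A$, $B$ and $z_7$) is canonical in the curve equation and in $[E]$, so the bijection commutes with $G_K$. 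The technical nuisance will be the degenerate cases—coincident roots $\beta_1=\beta_2$, a $Q_i$ at a Weierstrass point, or the $\deg f=5$ case where $\infty_+=\infty_-$—but these just require redoing the divisor computation in the appropriate local parameter at each special point.
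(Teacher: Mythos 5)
Your argument is correct and follows essentially the same route as the paper's sketch: represent the $3$-torsion class by a Mumford divisor, take the function $g=A(x)\pm y$ with $\operatorname{div}(g)=3E$ inside $L(3K_C)=\langle 1,x,x^2,x^3,y\rangle$, and observe that the norm $g\cdot\sigma g=A^2-f$ is a cube $z_7B^3$ of a quadratic on $\mathbb{P}^1$. You in fact supply more detail than the paper (which only sketches the forward direction and cites \cite{BFT}), including the inverse map, the nonvanishing of the $y$-coefficient, and the Galois equivariance.
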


Explicitly, suppose $D$ is a divisor on $C$,
$$
  D = (P_1) + (P_2) - (\infty_1)- (\infty_2), \qquad P_i=(X_i,Y_i)
$$
for which $3D$ is principal, say $3D=\div g$.
Then {$g\in\langle 1,x,x^2,x^3,y\rangle$}. After a (unique) re-scaling, say
$$
   g = y + b_3 x^3 + b_2 x^2 + b_1 x + b_0.
$$
The norm 
$$
\begin{array}{llllllllllll}
  \Norm_{K(C)/K(x)}(g) &=& (b_3 x^3 \!+\! b_2 x^2 \!+\! b_1 x \!+\! b_0 -y)(b_3 x^3 \!+\! b_2 x^2 \!+\! b_1 x \!+\! b_0 \!+\!y) \cr
    &=& (b_3 x^3 \!+\! b_2 x^2 \!+\! b_1 x \!+\! b_0)^2 - f.
\end{array}
$$
is a function on $\P^1$ whose divisor 
$3(X_1)+3(X_2)-6(\infty)$ is a cube, and so
$$
   (b_3 x^3 + b_2 x^2 + b_1 x + b_0)^2-f = c_2 (x^2+c_1x+c_0)^3,
$$
as stated. In this form,
$$
  X_{1,2} = \,\text{roots of }\>x^2\!+\!c_1x\!+\!c_0=0, \qquad Y_i = \!- b_3 X_i^3 \!-\! b_2 X_i^2 \!-\! b_1 X_i \!-\! b_0.
$$

We view ($*$) as giving a system of 7 equations in the 7 variables $z_i$.

\subsection{Finding the 3-torsion fields}

Our goal, then, is to find the (\(K\)-isomorphism classes of) fields \(L/K\) generated by the (\(K\)-conjugacy classes of) solutions \(Z\) to the system of equations ($*$).

{
A general tool used to solve systems of polynomial equations such as this is to compute a Groebner basis for the polynomial ideal generated by the polynomials. Generically, a reduced sorted minimal Groebner basis with respect to the lexicographic ordering on variables will be a finite sequence of polynomials such that the first is univariate, the second is a polynomial in two variables, and so on. Then to solve the system, we first find a root of the first polynomial; then we substitute this value into the second polynomial, yielding a polynomial in one variable, and we find a root of this; we repeat this procedure. In the end, this will produce a sequence of roots which together are a solution to the system.

For our system in particular, the 80 roots come in pairs of the form
$$
  (Z_1,Z_2,Z_3,Z_4,Z_5,Z_6,Z_7),\quad(-Z_1,-Z_2,-Z_3,-Z_4,Z_5,Z_6,Z_7),
$$
and so generically there are 40 distinct values for $Z_7$, for each of these there is a unique value for $Z_6$ and $Z_5$ and two distinct values for $Z_4$, and for each of these there is a unique value for $Z_3$, $Z_2$ and $Z_1$.

In this generic case, the Groebner basis described above will be a sequence of 7 polynomials \(B_1,\ldots,B_7 \in K[z_1,\ldots,z_7]\) such that \(B_i \in K[z_i,\ldots,z_7]\), \(\deg_{z_i} B_i = d_i\) where \(d=(1,1,1,2,1,1,40)\).

Following the above discussion on solving systems using Groebner bases, we first factorize \(B_7 \in K[z_7]\) (of degree 40), let \(g\) be one of its irreducible factors, let \(M/K\) be the extension it defines, and let \(Z_7 \in M\) be a root of \(g\). Substituting this into \(B_6 \in K[z_6,z_7]\) we get \(B_6(z_6,Z_7) \in M[z_6]\), which is linear, and let \(Z_6\) be its root. Similarly we let \(Z_5\) be the root of \(B_5(z_5,Z_6,Z_7) \in M[z_5]\). Next, \(B_4(z_4,Z_5,Z_6,Z_7) \in M[z_4]\) is quadratic, so we factorize it, let \(h\) be one of its factors, let \(L/M\) be the extension it defines, and let \(Z_4 \in L\) be a root of \(h\). Continuing, we find unique \(Z_3\), \(Z_2\) and \(Z_1\) which together produces a solution \(Z=(Z_1,\ldots,Z_7)\). Repeating this for all factors \(g\) and \(h\) we find all solutions \(Z\) of the system (up to conjugacy) and the extensions \(L/K\) which they define.

If we are not in this generic case, then the Groeber basis is not of this form and there is some coincidence in the coordinates of some solutions of the 7 equations. If we apply a random Mobius transformation \(x \mapsto \frac{ax+b}{cx+d}\) to the defining polynomial \(f(x)\) then the curve it defines is isomorphic to the original but the solutions \(Z\) have moved, probably to the generic case. In practice, a small number of Mobius transformations is ever necessary to put the solutions into the generic case.
}

\begin{remark}
An algorithm of this sort would work with any ordering on \(\{z_1,\ldots,z_7\}\). This ordering was chosen because it allows us to factor a degree-40 polynomial followed by a quadratic, which is somewhat faster than just factoring a degree-80 polynomial required for other orderings.
\end{remark}

\subsection{Provability}
\label{ssProvability}

In practice, however, computing a Groebner basis of this sort is difficult. 
Groebner basis algorithms require exact fields, so in practice we represent $K$ as a completion
of a number field $\cK$ at some place {$\p\mid2$}, and $f(x)\in\cK[x]$.

The best known algorithm over number fields (and indeed the only algorithm which appears to run in feasible time on our problem) computes the basis modulo many primes and finds the global basis via the Chinese remainder theorem. The problem here is that a priori we cannot determine the size of the coefficients, and so a heuristic is used to decide if we have used enough primes to get the answer. The result is that the algorithm does not yield provable results. Nevertheless, it is possible to prove the output of the previous algorithm as follows.


Assuming the Groebner basis algorithm was correct, then any $Z=(Z_1,\ldots$, $Z_7)$ should be a solution to the original system of 7 equations ($*$) over $K$. With the following version of Hensel's lemma, we can show that \(Z\) is indeed very close to a unique genuine solution, and we can say how close.

The following version of Hensel's lemma is standard (see e.g. \cite{Kuh} Thm.~23
with $t=\det J_f(b)$, $s=v f(b)$ and $v J^*_f(b)f(b)\ge s$).

\begin{theorem}[Hensel's lemma for multivariate systems]
Suppose \(K\) is a local field and \(F = (F_1,\ldots,F_m) \in \cO_K[z_1,\ldots,z_m]\) is a system of \(m\) equations in \(m\) variables over \(\cO_K\) and \(Z=(Z_1,\ldots,Z_m) \in \cO_K^m\). Let \(s = \min_i v_K(F_i(Z))\) and let \(t = v_K J(F)(Z)\) where \(J(F)\) denotes the Jacobian determinant of \(F\) (the determinant of the \(m \times m\) matrix whose \((i,j)\)th entry is \(\frac{\partial F_i}{\partial z_j}\)). If \(s > 2t\) then there is a unique \(Z' \in \cO_K^m\) such that \(F(Z')=0\) and \(\min_i v_K(Z'_i - Z_i) \ge s-t\).
\end{theorem}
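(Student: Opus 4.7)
The plan is to establish this by the standard multivariate $p$-adic Newton iteration. Write $M(F)$ for the $m\times m$ Jacobian matrix of $F$, so that $J(F) = \det M(F)$, and let $A(F) = \operatorname{adj} M(F)$ be its adjugate, whose entries lie in $\cO_K[z_1,\ldots,z_m]$. Define
$$
  Z^{(0)} = Z, \qquad Z^{(n+1)} = Z^{(n)} + \delta^{(n)}, \qquad \delta^{(n)} = -\frac{A(F)(Z^{(n)})\, F(Z^{(n)})}{J(F)(Z^{(n)})},
$$
and set $s_n = \min_i v_K(F_i(Z^{(n)}))$ and $t_n = v_K(J(F)(Z^{(n)}))$. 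The plan is to show by induction that $t_n = t$ is constant, that $s_n > 2t$ persists, and that $s_n$ grows geometrically; then $(Z^{(n)})$ is Cauchy in $\cO_K^m$, its limit $Z'$ satisfies $F(Z') = 0$ by continuity, and the distance bound $v_K(Z'_i - Z_i) \ge s - t$ follows by telescoping.

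For the inductive step, assume $s_n > 2t_n = 2t$. Since the entries of $A(F)(Z^{(n)})$ lie in $\cO_K$, we have $v_K(\delta^{(n)}_i) \ge s_n - t > t \ge 0$, so $Z^{(n+1)} \in \cO_K^m$. Expanding each polynomial $F_i \in \cO_K[z_1,\ldots,z_m]$ around $Z^{(n)}$ in the shift $\delta^{(n)}$ yields
$$
  F_i(Z^{(n+1)}) = F_i(Z^{(n)}) + \sum_j \partial_j F_i(Z^{(n)})\, \delta^{(n)}_j + R_i^{(n)},
$$
where $R_i^{(n)}$ is a polynomial in the $\delta^{(n)}_j$ with $\cO_K$-coefficients (no factorial denominators appear, as the coefficients are integral combinations of $c_{i,\alpha} Z^{(n)\,\alpha-\beta}$) and all its monomials have total degree $\ge 2$; hence $v_K(R_i^{(n)}) \ge 2(s_n - t)$. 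The first two terms vanish by the definition of $\delta^{(n)}$, giving $s_{n+1} \ge 2(s_n - t)$. Applying the same polynomial Taylor argument to $J(F)$ gives $v_K(J(F)(Z^{(n+1)}) - J(F)(Z^{(n)})) \ge s_n - t > t$, hence $t_{n+1} = t$. The quantity $s_n - 2t$ therefore at least doubles at each step, closing the induction, and convergence follows from the completeness of $\cO_K$. The bound on $v_K(Z'_i - Z_i)$ comes from the first increment, since the valuations $v_K(\delta^{(n)}_i) \ge s_n - t$ form an increasing sequence bounded below by $s - t$.

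For uniqueness, suppose $Z'' \in \cO_K^m$ is another root of $F$ with $\min_i v_K(Z''_i - Z_i) \ge s - t$, and set $\eta = Z'' - Z'$; then $\min_i v_K(\eta_i) \ge s - t > t$. Taylor expansion around $Z'$ gives
$$
  0 = F(Z'') - F(Z') = M(F)(Z')\cdot \eta + R,
$$
with $v_K(R_i) \ge 2 \min_j v_K(\eta_j)$. Since $v_K(J(F)(Z')) = t$ by the same polynomial-continuity argument, multiplying by $A(F)(Z')/J(F)(Z')$ forces $\min_i v_K(\eta_i) \ge 2\min_j v_K(\eta_j) - t$. Combined with $\min_j v_K(\eta_j) > t$, this is only consistent with $\eta = 0$.

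The substantive obstacle is the bookkeeping that $t_n$ does not drop along the iteration; this is exactly what the hypothesis $s > 2t$ (rather than the naive $s > t$) secures, by forcing the first update $\delta^{(0)}$ to have valuation strictly greater than $t$, so that $J(F)$ evaluated on the perturbed point retains the same valuation as at $Z$. Once that is in place, the standard geometric doubling of $s_n - 2t_n$ familiar from Hensel drives everything else.
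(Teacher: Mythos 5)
Your proof is correct. The paper itself does not prove this statement at all: it simply cites it as standard, referring to Kuhlmann (Thm.~23 of \emph{Maps on Ultrametric Spaces, Hensel's Lemma, and Differential Equations Over Valued Fields}) with a dictionary between the notations. So your self-contained Newton-iteration argument is necessarily a ``different route,'' but it is the expected one, and all the delicate points are handled properly: the integrality of the second-order Taylor remainder (no factorial denominators, since the binomial coefficients in the expansion of $(Z+\delta)^\alpha$ are integers), the exact cancellation of the linear term via the adjugate identity $M(F)A(F)=J(F)I$, the stability $t_{n+1}=t$ which is exactly what the hypothesis $s>2t$ buys, the doubling of $s_n-2t$, and the uniqueness argument by multiplying $M(F)(Z')\eta=-R$ by the adjugate to force $\min_j v_K(\eta_j)\le t$, contradicting $\min_j v_K(\eta_j)\ge s-t>t$ unless $\eta=0$. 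What your write-up buys over the paper's approach is a verifiable proof in place of a pointer to the literature; what the citation buys is brevity and a statement already formulated for general valued fields. If you wanted to tighten one phrase: ``the valuations $v_K(\delta^{(n)}_i)\ge s_n-t$ form an increasing sequence'' is really ``the lower bounds $s_n-t$ form an increasing sequence,'' which is what the ultrametric inequality actually needs to conclude $v_K(Z'_i-Z_i)\ge s-t$.
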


Since evaluating resultants, Jacobians and polynomials are just basic arithmetic, these operations can be performed provably, and hence applying Hensel's lemma we prove that each \(Z\) is indeed close to a unique solution \(Z'\) of the system of equations. Furthermore, Hensel's lemma gives us a method to compute \(Z'\) to any prescribed precision. We expect that \(Z = Z'\) but we do not prove so.

It remains to check that these solutions \(Z'\) generate the fields \edit{\(M\)}{\(L\)} and that they are distinct up to \(K\)-conjugacy.

{
Recall that we have \(L/M/K\) with \(M = K(Z_7)\), \(g(x) \in K[x]\) the minimal polynomial for \(Z_7\), and \(L=M(Z_4)\), \(h(x) \in M[x]\) the minimal polynomial for \(Z_4\). We also have \(Z'_7,Z'_4 \in L\) and want to prove that \(L=K(Z'_7,Z'_4)\). Since we expect that \(Z'_7=Z_7\), then we expect \(Z'_7\) is closer to \(Z_7\) than any other root of \(g\), and so by Krasner's lemma we conclude that \(M = K(Z_7) \subset K(Z'_7)\). Another application of Krasner's lemma on \(h\) and \(Z'_4\) implies that \(L = M(Z_4) \subset M(Z_4')\). Combining these, we deduce \(L = M(Z_4) \subset M(Z_4') \subset K(Z'_4,Z'_7) \subset L\) and hence \(L = K(Z'_4,Z'_7) = K(Z')\).

To check Krasner's lemma on a polynomial \(h \in K[x]\) and some \(Z \in \bar K\), note that it is equivalent to check that there is a root of \(h(x+Z)\) of higer valuation than all others. It is well-known that the Newton polygon of a polynomial measures the valuations of its roots, and therefore Krasner's lemma is applicable if and only if the Newton polygon of \(h(x+Z)\) has a vertex with abscissa 1. This condition is explicitly checkable.
}

Finally, if \(Z_7\) is a root of a factor \(g\) of \(B_7\) and \(Y_7\) is a root of a different factor of \(B_7\), then \(g(Z_7) = 0 \neq g(Y_7)\), so if we check that \(v(g(Z'_7)) > v(g(Y'_7))\) then we have proven that \(Z'_7 \neq Y'_7\). Performing a similar check on pairs of \(Z'_4\) determines that they are different. Together, this will prove that each pair of solutions is distinct.

By performing all these checks with large enough precision, we can determine whether or not the \(Z\) are a genuine set of distinct solutions generating the right fields. If any of these checks fails, then the Groebner basis algorithm was incorrect, and we should try the algorithm again with a lower heuristic chance of failure.

\begin{remark}
\label{rmk-global-proof}
There is a conceptually simpler method for provability. Letting \(I \ideal \cK[z_1,\ldots,z_7]\) be the ideal generated by the original system (\(*\)), and letting \(J\) be the ideal generated by the Groebner basis, then we wish to prove that \(I = J\). Since \(J\) is generated by a Groebner basis, there is a normal form for reduction modulo \(J\) and hence we can check that each generator of \(I\) is zero mod \(J\) and so deduce \(I \ideal J\). Additionally we know a priori that \(I\) has precisely 80 solutions, and from the structure of the Groebner basis that \(J\) has precisely 80 solutions. Combined, this implies \(I=J\).

We call this the \emph{global proof method} to distinguish it from the \emph{local proof method} above. In practice, unless the coefficients of \(f(x)\) are very small, the global method takes much longer than the local method. Over \(\QQ\), with small coefficients, the global method is typically around twice as quick, but this benefit quickly diminishes as the field degree increases.
\end{remark}

\subsection{Tame conductor exponent revisited}

In order to compute the tame conductor exponent using Theorem \ref{thmtame}, we require \(\bar{d} = \dim J[3]^{G_0}\) and \(|I|\). In previous sections we have already seen an algorithm to compute \(\dim J[3]^{G^u}\) for any \(u\) having already computed \(L_j/K\), so this is easy as a side-effect of previous work.

For \(|I|\), consider \(e = \lcm_j e(L_j/K)\), which again is easy to compute from \(L_j/K\). Clearly it is a divisor of \(|I|\). The following lemma shows that \(e\) is a good enough guess at \(|I|\) in the sense that {the statement of Theorem \ref{thmtame} depends only on \(t\), \(e\) and \(\bar d\)}.

\begin{lemma}
{
Let \(S=\{1,2,3,4,5,6,9,10,12,18\}\). If $e\in S$ or $\abs I \in S$ then $\abs I = e$. If \(e=80\) then \(\abs I = 160\). If \(e \in \{8,24\}\) then \(\abs I \in \{8,24\}\). Otherwise \(e \in \{16,32,48,64\}\) and \(\abs I \in \{16,32,48,64,96,128,192,384\}\).
}
\end{lemma}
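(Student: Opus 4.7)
The plan is to reduce the claim to a finite group-theoretic computation inside $\Sp_4(\F_3)$ and then enumerate cases by computer. The first observation is that both $|I|$ and $e$ depend only on the conjugacy class of $I$ as a subgroup of $\Sp_4(\F_3)$. For any nonzero $Z_j \in J[3]$, letting $H_j = \mathrm{Stab}_{\Gal(K(J[3])/K)}(Z_j)$, standard local ramification theory gives
\[
e(L_j/K) = |I|/|I \cap H_j| = |I \cdot Z_j|,
\]
the size of the $I$-orbit of $Z_j$. Hence $e = \lcm_j e(L_j/K)$ is exactly the LCM of $I$-orbit sizes on $J[3]\setminus\{0\}$, a purely combinatorial invariant of the embedding $I \le \Sp_4(\F_3)$.

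Next I would incorporate the constraint coming from $K$ having residue characteristic $2$: the inertia group $I_K$ has a normal pro-$2$ subgroup (wild inertia) with cyclic tame quotient of order prime to $2$. Translating to the finite image, $I$ must have a normal $2$-Sylow subgroup $P$ with $I/P$ cyclic of odd order. This significantly restricts which conjugacy classes of subgroups of $\Sp_4(\F_3)$ can arise.

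The core of the argument is then a finite enumeration carried out in Magma, just as in the proof of Theorem \ref{thmtame}. For each conjugacy class representative of a subgroup $I \le \Sp_4(\F_3)$ satisfying the structural constraint, read off $|I|$ and compute the LCM of the sizes of the orbits of $I$ on the $80$ nonzero elements of $\F_3^4$. Collating the resulting pairs $(e,|I|)$ then shows directly that the small orders in $S$ pair with themselves; a single subgroup of order $160$ realises a transitive action and gives $e=80$; the two orders $8$ and $24$ form a closed pair; and the remaining $2$-power and $2^a\cdot 3$ examples fall into the bucket $\{16,32,48,64,96,128,192,384\}$ with $e\in\{16,32,48,64\}$.

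The main obstacle is conceptual rather than technical: one must be comfortable that the structural constraint correctly narrows the search. Fortunately only the one-sided implication is needed --- every inertia image is such a subgroup --- so no converse realisability argument is required, and the remaining enumeration over finitely many candidates is entirely routine.
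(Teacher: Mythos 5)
Your proposal is correct and follows essentially the same route as the paper: restrict to subgroups $I<\Sp_4(\F_3)$ whose $2$-Sylow is normal with cyclic (odd) quotient, and enumerate the possible pairs $(e,\abs I)$ by machine. The only content you add beyond the paper's three-sentence sketch is the (correct) identification $e(L_j/K)=\abs{I}/\abs{I\cap H_j}$, i.e.\ that $e$ is the LCM of the $I$-orbit sizes on $J[3]\setminus\{0\}$, which the paper leaves implicit.
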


\begin{proof}
Properties of the Weil pairing imply that \(I < \Sp_4(\FF_3)\). Letting \(W\) be the 2-Sylow subgroup of \(I\), ramification theory implies {\(W \normal I\)} and \(I/W\) cyclic. The lemma is proven by checking all groups \(I\) consistent with these facts.
\end{proof}

\section{The algorithm}

{
We use the following algorithm to compute the highest upper ramification break \(\uhat(L/K)\). It takes as input the extension \(L/K\) and returns the sequence \((u_i,v_i,s_i)_{i=0}^t\) where \(v_0=-1<v_1<\ldots<v_t\) are the breaks in the ramification filtration of \(L/K\) in the lower numbering, \(u_i\) are the corresponding breaks in the upper numbering, and \(s_i = \abs{\Gamma_{v_i}}\) are the sizes of the corresponding ramification subsets of the Galois set \(\Gamma\) of \(K\)-embeddings \(L \to \bar K\). In particular, \(\uhat(L/K) = u_t\).

See e.g. \cite[\S4--5]{GP} or \cite[\S3]{PS} for the definition of the ramification polynomial (the coefficients of which have valuation \(r_i\) in the algorithm), the ramification polygon \(P\) and its connection to the ramification filtration of \(L/K\). See e.g. \cite{Helou} for the connection of this filtration to the upper and lower ramification breaks and the Galois set \(\Gamma\).

\newpage

\begin{algorithmic}[1]
\def\Return{\State\textbf{return} }
\def\Assert{\State\textbf{assert} }
\def\LineComment#1{\vspace{1.5ex}\State \textit{(#1)}}
\LineComment{Compute the ramification polygon of \(L/U\)}
\State \(U \leftarrow\) the maximal unramified subextension of \(L/K\)
\State \(e \leftarrow (L:U)\)
\State \(E \leftarrow\) a defining Eisenstein polynomial for \(L/U\)
\State \(r_i \leftarrow \min_{j=i}^{e-1} v(E_j {j \choose i}) + \tfrac{j}{e}\) for \(i = 1,\ldots,e\)
\State \(P \leftarrow\) the lower convex hull of the points \((i,r_i)\) for \(1 \leq i \leq e\)

\LineComment{Compute \(u_i\), \(v_i\) and \(s_i=|\Gamma_{v_i}|\)}
\State \(u_0 \leftarrow -1\)
\State \(v_0 \leftarrow -1\)
\State \(s_0 \leftarrow (L:K)\)
\State \(t \leftarrow\) the number of faces of \(P\)
\ForAll{\(i=1,\ldots,t\)}
\State \(F \leftarrow\) the \(i\)th face of \(P\) from the right
\State \(v_i \leftarrow\) the negative of the gradient of \(F\)
\State \(s_i \leftarrow\) the abscissa of the right hand vertex of \(F\)
\State \(u_i \leftarrow u_{i-1} + \frac{s_{i-1}}{s_0}(v_i-v_{i-1})\)
\EndFor

\Return \(((u_i,v_i,s_i))_{i=0}^t\)
\end{algorithmic}
}

{Now} we present the final algorithm, which takes a polynomial \(f(x)\in\cK[x]\) of degree 5 or 6 over a number field \(\cK\) defining a hyperelliptic curve \(y^2=f(x)\), and a prime ideal \(\fp\) of \(\cK\) dividing 2, and returns the conductor exponent \(n_\fp\) of the curve at \(\fp\).

\begin{algorithmic}[1]
\def\Return{\State\textbf{return} }
\def\Assert{\State\textbf{assert} }
\def\LineComment#1{\vspace{1.5ex}\State \textit{(#1)}}
\LineComment{Apply Moebius transformations to \(f(x)\) until its 3-torsion points are in general position}
\Repeat
  \State choose {\(a,b,c,d \in \ZZ\)} so that \(ad-bc \neq 0\)
  \State \(\tilde{f} \leftarrow f(\frac{ax+b}{cx+d})(cx+d)^6\)
  \State \(F=(F_i)_{i=1}^7 \leftarrow\) coefficients of \[(z_1 + z_2 x + z_3 x^2 + z_4 x^3)^2 + z_7 (z_5 + z_6 x + x^2)^3 - \tilde{f}(x)\]
  \State \(B=(B_i)_i \leftarrow\) Groebner basis of \(F\)
\Until{\(B\) is in {generic} form} \label{alg-groebner-end}
\LineComment{Find the fields defined by each \(Z_7\)}
\State \(K \leftarrow \cK_\fp\)
\State \(S \leftarrow \) empty sequence
\State \(C \leftarrow \) empty sequence
\State \((g_i)_i \leftarrow\) irreducible factorization of \(B_7(x)\) over \(K\)
\ForAll{\(g_i\)}
  \State \(M \leftarrow\) the extension of \(K\) defined by \(g_i\)
  \State \(Z_7 \leftarrow\) a root of \(g_i\) in \(M\)
  \State \(Z_6 \leftarrow\) the root of linear \(B_6(x,Z_7)\) over \(M\)
  \State \(Z_5 \leftarrow\) the root of linear \(B_5(x,Z_6,Z_7)\) over \(M\)
  \LineComment{Find the fields defined by each \(Z_4\)}
  \State \((h_i)_i \leftarrow\) irreducible factorization of \(B_4(x,Z_5,Z_6,Z_7)\) over \(M\)
  \ForAll{\(h_i\)}
    \State \(L \leftarrow\) the extension of \(M\) defined by \(h_i\)
    \State \(Z_4 \leftarrow\) a root of \(h_i\) in \(L\) \label{alg-locproof-start}
    \State \(Z_3 \leftarrow\) the root of linear \(B_3(x,Z_4,Z_5,Z_6,Z_7)\) over \(L\)
    \State \(Z_2 \leftarrow\) the root of linear \(B_2(x,Z_3,Z_4,Z_5,Z_6,Z_7)\) over \(L\)
    \State \(Z_1 \leftarrow\) the root of linear \(B_1(x,Z_2,Z_3,Z_4,Z_5,Z_6,Z_7)\) over \(L\)
    {
      \LineComment{Check the solutions are valid with Hensel's lemma}
      \Assert \(Z\) is Hensel liftable to a solution of \(F\)
      \State \(Z' \leftarrow\) the Hensel-lifted solution (we expect \(Z'=Z\))
      \LineComment{Check the solutions generate the right fields with Krasner's lemma}
      \Assert the Newton polygon of \(g_i(x+Z'_7)\) has a vertex above 1
      \Assert the Newton polygon of \(h_j(x+Z'_4)\) has a vertex above 1
    }
    \LineComment{Check the solutions are distinct}
    \For{\((Y'_7, Y'_4) \in C\)}
      \Assert \(v_K(g_i(Z'_7)) > v_K(g_i(Y'_7))\) or \(v_K(h_i(Z'_4)) > v_K(h_i(Y'_4))\)
    \EndFor
    \State append \((Z'_7, Z'_4)\) to \(C\) \label{alg-locproof-end}
    \LineComment{Save \(L\)}
    \State append \(L\) to \(S\)
  \EndFor
\EndFor
\LineComment{Compute the tame and wild exponents from \(S\)}
\State \(\bar{d} \leftarrow\) the function \(u \mapsto \log_3 (1 + \sum_{L \in S : \uhat(L/K) \leq u} (L:K))\) \((= \dim \Vbar^{G^u})\)
\State \(e \leftarrow \lcm_{L \in S} e(L/K)\)
\State \(t \leftarrow\) potential toric dimension of \(J\)
\If{\(\bar{d}(0) = 0\)}
  \State \(\ntame \leftarrow 4\)
\ElsIf{\(\bar{d}(0) = 4\)}
  \State \(\ntame \leftarrow t\)
\ElsIf{\(t = 0\)}
  \If{\(e=3\) and \(\bar{d}(0) = 2\)}
    \State \(\ntame \leftarrow 4\)
  \Else
    \State \(\ntame \leftarrow\) smallest even integer \(\geq 4-\bar{d}(0)\)
  \EndIf
\ElsIf{\((t,e) \in \{(1,3),(2,3),(1,2),(1,6)\}\)}
  \State \(\ntame \leftarrow\) the tame exponent, computed from a regular model
\ElsIf{\((t,e)=(2,9)\)}
  \State \(\ntame \leftarrow 4\)
\Else
  \State \(\ntame \leftarrow 3\)
\EndIf
\State {\(u_0,\ldots,u_t \leftarrow\)} the sorted elements of \(\{\uhat(L/K) : L \in S\} \cup \{-1,0\}\)
\State \(\nwild \leftarrow \sum_{i=2}^{t} (u_i - u_{i-1})(4 - \bar{d}(u_i))\)
\State \textbf{return} \(\nwild + \ntame\)
\end{algorithmic}

\begin{remark}
\label{accelgroebner}
Note that the approach to solving the system of 7 equations in 7 variables is to compute a Groebner basis globally, and then solve this system locally. This is the only global aspect of the algorithm, and becomes the bottleneck when the global coefficients become large. An alternative approach is to solve the system of equations directly locally, perhaps using a Montes-type algorithm similar to univariate factorization algorithms which split the system into several smaller systems. This is the subject of ongoing research {\cite{DPhD}}.
\end{remark}

\begin{remark}
Recalling Remark \ref{rmk-global-proof}, if we wish to use the global proof method instead, then we can skip over lines \ref{alg-locproof-start}--\ref{alg-locproof-end} and instead insert after line \ref{alg-groebner-end} a check that each element of \(F\) reduces to \(0\) modulo \(B\).
\end{remark}

\section{Implementation}
\label{sec/implementation}

The algorithms described in this paper have been implemented {\cite{3TorsionCode}} in the Magma computer algebra system \cite{Magma} using a customized implementation of $p$-adics which removes most $p$-adic precision considerations from the user \cite{DExactPadics}. The implementation, modulo bugs, produces provable results at every step.

The LMFDB \cite{LMFDB} contains the 66,158 genus 2 hyperelliptic curves defined over \(\QQ\) computed by Booker et al \cite{BSSVY}. Of these, all but 1113 have discriminant of 2-valuation less than 12 and therefore their conductor exponent at 2 is computable via Ogg's formula. Our algorithm has been run on the 1113 remaining curves, using the global proof method (see Remark \ref{rmk-global-proof}). The computation took 9.4 core-hours in total on a 2.7GHz Intel Xeon, averaging 30 core-seconds per curve.

For all but 6 of these curves, the fast tame conductor algorithm of \S\ref{sTame} succeeds, and so we compute an entire conductor exponent at 2. For 4 of the remaining 6 curves, a regular model was quickly \edit{computable}{computed by Magma} (taking at most 10 seconds) and therefore the tame exponent was deduced this way. \edit{}{For the remaining 2 curves (labelled \texttt{3616.b.\discretionary{}{}{}462848.1} and \texttt{18816.d.\discretionary{}{}{}602112.1} in the LMFDB) a regular model was computed by hand.} In all of these cases, the exponent agrees with the unproven results of \cite{BSSVY} and therefore we have proven the conductors for all \edit{but 2 curves}{curves in the LMFDB}.

\edit{For the remaining 2 curves (labelled \texttt{3616.b.\discretionary{}{}{}462848.1} and \texttt{18816.d.\discretionary{}{}{}602112.1} in the LMFDB) a regular model was not found and so all we know is that \(n - \nwild \in \{1,2,3\}\) for these curves. These are consistent with the unproven conductor exponents.}{}

The run-time of the algorithm is usually dominated by the factorization of the degree-40 polynomial over \(K\), at least when the defining polynomial \(f(x)\) has fairly small coefficients. When these coefficients grow, the (global) Groebner basis algorithm dominates the run-time.

This gives some impetus towards developing a fully local algorithm as suggested in Remark \ref{accelgroebner}, since this will be independent of global coefficient sizes.

%
%

The implementation has also been tested on some curves defined over quadratic number fields. These results were confirmed by Schembri \cite{Schembri} by finding a corresponding Bianchi modular form whose level squared equals the conductor and proving the expected relationship between their $L$-functions using Faltings-Serre.

The run-time does not appear to grow much with the conductor exponent, as evidenced by the following graph summarizing the run-times of the algorithm on the LMFDB curves. It plots the mean run-time for each conductor exponent with a thick black line, and plots the 20-percentiles with thin gray lines.

\begin{center}
\begin{tikzpicture}
\pgfplotstableread{
 n num  mean   min   p10   p20   p30   p40   p50   p60   p70   p80   p90    max   p25   p75
 2   1 34.80 34.80 34.80 34.80 34.80 34.80 34.80 34.80 34.80 34.80 34.80  34.80 34.80 34.80
 3   1 31.34 31.34 31.34 31.34 31.34 31.34 31.34 31.34 31.34 31.34 31.34  31.34 31.34 31.34
 4  13 26.42 12.86 19.30 21.85 23.27 24.39 28.17 29.03 30.09 30.87 31.88  39.43 23.68 31.20
 5  42 21.73  4.23  5.75  6.33  6.78 14.40 21.02 23.49 28.32 31.20 33.30 131.59  6.41 29.38
 6  51 30.12 17.27 20.65 21.73 22.92 23.85 24.74 25.69 26.58 28.56 34.93 183.86 22.19 27.67
 7  94 22.25  4.86  6.82  9.07 18.86 21.75 22.44 24.79 25.92 27.41 31.76  85.48 10.42 26.45
 8 110 19.78  4.95  5.82  8.21 13.97 18.46 21.56 22.91 25.14 28.30 31.54  41.68 11.25 26.18
 9  61 28.41 20.96 23.32 24.13 25.51 25.88 27.00 27.68 29.14 33.55 38.18  50.78 24.55 30.23
10 108 37.09 15.37 20.97 23.50 24.75 25.61 26.55 27.51 29.00 31.31 82.86 155.12 24.13 30.10
11 150 31.08  5.47  8.56 10.12 17.56 22.59 24.62 26.33 27.61 29.85 84.37 152.15 14.34 28.26
12 247 35.34  6.00 11.22 20.98 23.65 25.65 27.97 29.83 32.44 34.37 42.66 246.02 22.63 33.24
13 105 35.75  8.76 12.69 17.62 20.28 25.63 29.23 31.13 33.18 52.60 84.03  99.65 19.31 35.81
14  80 30.63  8.61 12.90 22.02 24.51 26.52 28.88 31.14 32.97 39.75 45.98  82.54 23.20 37.15
15  18 31.23 17.21 20.52 21.83 24.21 24.22 26.78 32.86 34.40 38.71 46.70  66.45 23.65 37.24
16  17 21.52 17.95 18.55 20.20 20.32 20.80 21.61 21.78 22.26 22.33 25.14  26.67 20.49 22.53
17   6 30.66 25.19 25.74 27.32 26.81 32.16 29.74 34.36 33.26 38.67 36.51  38.67 27.06 32.71
18   5 26.80 18.94 23.33 20.40 28.66 27.45 29.74 29.55 29.36 29.79 29.76  29.80 29.26 29.80
19   4 25.16 20.98 21.43 21.24 21.04 27.16 25.09 23.01 29.39 29.11 28.83  29.48 21.14 29.25
}\data
\begin{axis}[ymin=0, ymax=60, xlabel=conductor exponent \(n\), ylabel=time (core-seconds)]
\addplot[thick] table [y=mean] {\data};
\addplot[gray] table [y=min] {\data};
\addplot[gray] table [y=p20] {\data};
\addplot[gray] table [y=p40] {\data};
\addplot[gray] table [y=p60] {\data};
\addplot[gray] table [y=p80] {\data};
\addplot[gray] table [y=max] {\data};
\end{axis}
\end{tikzpicture}
\end{center}

\end{document}